\newtheorem{thm}{Theorem}[section]
\newtheorem{propo}[thm]{Proposition}
\newtheorem{lemme}[thm]{Lemma}
\newtheorem{cor}[thm]{Corollary}
\theoremstyle{definition}
\newtheorem{de}{Definition}[section]
\providecommand{\norm}[1]{\left\lVert#1\right\rVert}
\providecommand{\abs}[1]{\lvert#1\rvert}
\providecommand{\bignorm}[1]{\bigl\lVert#1\bigr\rVert}
\newcommand{\bigabs}[1]{\bigl\lvert #1 \bigr\rvert}
\newcommand{\R}{\mathbb{R}}
\newcommand{\N}{\mathbb{N}}
\title[Geometric partial differentiability on manifolds]{Geometric partial differentiability on manifolds:\\ the tangential derivative and the chain rule}
\author{Alexandra Convent}
\address{
  Institut de Recherche en Math\'ematique et Physique\\
  Universit{\'e} catholique de Louvain\\
  Chemin du Cyclotron 2 bte L7.01.01, 1348 Louvain-la-Neuve, Belgium}
\email{Alexandra.Convent@uclouvain.be}
\author{Jean Van Schaftingen } 
\address{
  Institut de Recherche en Math\'ematique et Physique\\
  Universit{\'e} catholique de Louvain\\
  Chemin du Cyclotron 2 bte L7.01.01, 1348 Louvain-la-Neuve, Belgium}
\email{Jean.VanSchaftingen@uclouvain.be}
\subjclass[2010]{26B05 (26B35, 58C20)}
\keywords{Directional derivative; chain rule; differentiable manifold; Lipschitz continuous functions.}
\begin{document}

\begin{abstract}
We define the tangential derivative, a notion of directional derivative which is invariant under diffeomorphisms. 
In particular this derivative is invariant under changes of chart and is thus well-defined for functions defined on a differentiable manifold. 
This notion is weaker than the classical directional derivative in general and equivalent to the latter for Lipschitz continuous functions.
We characterize also the pairs of tangentially differentiable functions for which the chain rule holds.
\end{abstract}

\maketitle

\section{Introduction}

Given a differentiable manifold \(M\), we are interested in defining a geometrically partial differentiation for functions from \(M\) to \(\R\).
That is, we would like to have a definition that implies classical directional differentiability when \(M=\R^m\) and that \emph{does not depend on local charts}. 
Equivalently we aim to define on \(\R^m\) a notion of partial differentiability which is invariant under diffeomorphisms. 

Let us first recall the classical definition of directional derivative which is not stable under maps \(T\colon \R^m\to \R^m\) such that \(T(V)\) is not locally in \(T(a)+ DT[V]\).
\begin{de}
Let \(A\subseteq \R^m\) and let \(V\subseteq \R^m\) be a linear subspace of \(\R^m\). 
A linear map \(L\in \mathcal{L}(V,\R^n)\) is a \emph{directional derivative} of a function \(f\colon A \to \R^n\) at \(a \in A \) with respect to \(V\) if 
for every \(\varepsilon >0\) there exists \(\delta >0\) such that if \(x\in A \cap (V+a)\) and \(\norm{x-a} \le \delta\), then
\[
\norm{f(x)- f(a) - L[x-a]} \le \varepsilon \norm{x-a}.
\]
\end{de}

The definition of directional derivative can be rephrased in terms of limits as 
\[
 \lim_{\substack{x \in V + a\\ x \to a}} \frac{\norm{f (x) - f (a) - L [x - a]}}{\norm{x - a}} = 0.
\]
 
The stronger notion that we are searching for is provided by the tangential derivative that we define as follows.
\begin{de}
\label{defTangential}
Let \(A\subseteq \R^m\) and let \(V\subseteq \R^m\) be a linear subspace of \(\R^m\). 
A linear map \(L\in \mathcal{L}(V,\R^n)\) is a \emph{tangential derivative} of a function \(f\colon A \to \R^n\) at \(a\in A\) with respect to \(V\) if 
for every \(\varepsilon >0\) there exist \(\delta > 0\) and \(\theta >0\) such that if \(x\in A\), \(v\in V\), \(\norm{x-a} \le \delta\) and \(\norm{x-a-v} \le \theta \norm{x-a}\), then 
\[
\norm{f(x) - f(a) - L[v]} \le \varepsilon \norm{x-a}.
\]
\end{de}

This new definition can be written in terms of limits as 
\[
 \lim_{\substack{v \in A\\ x \to a\\ \frac{\norm{x - a - v}}{\norm{x-a}} \to 0}} \frac{\norm{f (x) - f (a) - L [v]}}{\norm{x - a}} = 0.
\]

Geometrically, the tangential derivative considers the behavior of the function \(f\) in sharp cones around the plane \(V\). In particular, if the function \(f\) is tangentially differentiable, then the function \(f-f(a)\) grows linearly in a small sharp cone around the plane \(V\) : 
\begin{equation}\label{bounded_cone}
\limsup\limits_{\substack{x \to a \\ \frac{\mathrm{dist}(x-a,V)}{\norm{x-a}} \to 0}} \frac{\norm{f(x)-f(a)}}{\norm{x-a}} < \infty. 
\end{equation}
This condition \eqref{bounded_cone} is reminiscent of the directional Lipschitz condition \citelist{\cite{rock1979} \cite{rock1980}} but weaker. (If \(V=\R^m\), the latter condition is equivalent to the classical local Lipschitz condition.) 

We shall show that definition~\ref{defTangential} has the required invariance under diffeomorphisms (corollary~\ref{stableDiffeo}) and allows thus to define a tangential derivative on a differentiable manifold (definition~\ref{defManifold}). Although directional differentiability does not imply tangential differentiability in general, it is the case in particular for Lipschitz continuous functions (proposition~\ref{lipschitz}). 
This brings us to the original motivation of our work, which was to understand whether the directional derivative in the chain rule between Lipschitz continuous and bounded variation functions \cite{ambrosio} was well defined when working with maps between manifolds. 

We also show that the tangential derivative is the weakest notion of derivative which implies directional differentiability and which is invariant under diffeomorphisms. 
A serendipitous feature of this definition is that tangential differentiability with respect to \(V\) is equivalent to tangential differentiability with respect to the one-dimensional space spanned by \(v\) for every \(v\in V\) and linearity of the directional derivative (proposition \ref{characterization}).
This property is well-known to fail for directional derivatives. 

We finally investigate for the tangential derivative the chain rule which is a key property of derivatives (proposition~\ref{chainRule}). Although the rule \emph{does not hold in general}, it holds under a necessary and sufficient condition which is not very stringent, and it covers for example the case where the function on the left is Lipschitz continuous or where the function on the right has an injective tangential derivative.

\section{Examples and counterexamples of tangentially differentiable functions}

In this section, we give various examples and counterexamples of tangentially differentiable functions.

We first remark that in the limiting cases the definition covers known concepts. If \(V = \{0\}\), one observes by taking \(\theta = \frac{1}{2}\) that every function has a tangential derivative with respect to \(V\) and if \(V = \R^m\), the notions of directional and tangential derivatives are both equivalent with the classical definition of total differentiability of Stolz and Fr\'echet. 

In all the other cases, directional differentiability does not imply tangential differentiability. Given a linear subspace \(V\) such that \(\{0\} \subsetneq V \subsetneq \R^m\), we take \(\alpha >0\) and a linear map \(K\in \mathcal{L}(\R^m,\R^n)\) such that \(\mathrm{ker}(K) = V\), and we define the function \(f\colon \R^m \to \R^n\) for every point \(x\in \R^m\) by 
\begin{equation}\label{example}
f(x) = \left\{
\begin{aligned}
&\frac{K[x]}{\norm{x}^\alpha} && \text{ if } x\neq 0, \\
&0 && \text{ if } x = 0.
\end{aligned}\right.
\end{equation}
Since \(f=0\) on \(V\), the zero linear map is a directional derivative at the point \(0\) with respect to the subspace \(V\) but since \eqref{bounded_cone} is not satisfied, the function \(f\) does not have a tangential derivative at \(0\) with respect to the subspace \(V\).

It turns out however that for Lipschitz continuous function, the notions of directional and tangential derivatives are equivalent.
We recall that a function \(f\colon A\subseteq \R^m\to \R^n\) is \emph{Lipschitz continuous} if there exists a constant \(\kappa\ge0\) such that for all \(x,y\in A\), 
\(\norm{f(x)-f(y)} \le \kappa \norm{x-y}\).
We also denote by \(\langle v \rangle\) the \emph{vector space spanned} by \(v\), that is, \(\langle v \rangle = \{ r v \colon r \in \R\}\). 
\begin{propo}\label{lipschitz}
Let \(a \in A\subseteq \R^m\), let \(f\colon A \to \R^n\), let \(V\subseteq \R^m\) be a linear subspace of \(\R^m\) and let \(L\in \mathcal{L}(V,\R^n)\). 
If the function \(f\) is Lipschitz continuous, then the following statements are equivalent:
\begin{enumerate}[(i)]
\item \label{lip1} the linear map \(L\) is a directional derivative of \(f\) at \(a\) with respect to \(V\),
\item \label{lip2} the linear map \(L\) is a tangential derivative of \(f\) at \(a\) with respect to \(V\), 
\item \label{lip3} for every \(v\) in a dense subset of \(V\cap \partial B[0,1]\), the linear map \(L_{\arrowvert \langle v \rangle} \in \mathcal{L}(\langle v \rangle,\R^n)\) is a directional derivative of \(f\) at \(a\) with respect to \(\langle v \rangle\).
\end{enumerate} 
\end{propo}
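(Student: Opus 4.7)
The implications (ii)~$\Rightarrow$~(i) and (i)~$\Rightarrow$~(iii) hold without any hypothesis on $f$. For (ii)~$\Rightarrow$~(i) it suffices to take $v = x - a$ in Definition~\ref{defTangential}, so that $\norm{x - a - v} = 0 \le \theta \norm{x - a}$ and the tangential estimate reduces to the directional one. For (i)~$\Rightarrow$~(iii), one observes that $\langle v \rangle + a \subseteq V + a$ and $L[y - a] = L|_{\langle v \rangle}[y - a]$ whenever $y - a \in \langle v \rangle$, so the estimate from (i) immediately specializes to the one defining the directional derivative along $\langle v \rangle$; this in fact holds for every $v \in V \cap \partial B[0, 1]$, not merely for a dense subset.

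The substantive implication is (iii)~$\Rightarrow$~(ii), where Lipschitz continuity is essential. Let $\kappa$ denote the Lipschitz constant of $f$, extend $f$ to a Lipschitz map $\tilde f \colon \R^m \to \R^n$ with constant $\kappa'$ (for instance by McShane), and set
\[
\phi(w) := \tilde f(a + w) - \tilde f(a) - L[w] \qquad \text{for } w \in V,
\]
which is $(\kappa' + \norm{L})$-Lipschitz. Applying (iii) at points $a + t\tilde u \in A$ and using $\tilde f = f$ on $A$, one obtains $\phi(t\tilde u)/t \to 0$ as $t \to 0$ for every $\tilde u$ in the dense subset of $V \cap \partial B[0, 1]$ provided by (iii), and in particular the a priori bound $\norm{L} \le \kappa$. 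The Lipschitz regularity of $\phi$, combined with density, then upgrades this to $\phi(w) = o(\norm{w})$ as $w \to 0$ in $V$, that is, to classical directional differentiability of $\tilde f$ at $a$ along every direction of $V$ with derivative $L$.

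With this in hand, the tangential estimate follows from the splitting
\[
f(x) - f(a) - L[v] = \bigl(\tilde f(x) - \tilde f(a + v)\bigr) + \bigl(\tilde f(a + v) - \tilde f(a) - L[v]\bigr).
\]
The first term is bounded by $\kappa' \norm{x - a - v} \le \kappa' \theta \norm{x - a}$, and the second is $o(\norm{v}) = o(\norm{x - a})$ by the preceding paragraph, using $\norm{v} \le (1 + \theta)\norm{x - a}$. Choosing $\theta$ such that $\kappa' \theta \le \varepsilon/2$ and then $\delta$ small enough that the second summand is at most $\varepsilon \norm{x - a}/2$ completes the argument.

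The main obstacle is the density upgrade in the second paragraph: hypothesis (iii) only constrains $f$ on the set $A \cap (a + \langle \tilde u \rangle)$, which may genuinely be sparse, whereas the tangential estimate demands uniform control in every direction lying in a sharp cone about $V$. The extension $\tilde f$ together with the equi-Lipschitz control of $\phi$ is precisely what allows one simultaneously to transfer information from $f$ to $\tilde f$ where $A \cap (a + \langle \tilde u \rangle)$ provides none, and to pass from a dense set of unit directions to every unit direction in $V$.
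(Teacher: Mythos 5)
Your implications (ii) \(\Rightarrow\) (i) and (i) \(\Rightarrow\) (iii) are fine. The gap is at the crux of (iii) \(\Rightarrow\) (ii): you deduce from (iii) that \(\phi(t\tilde u)/t \to 0\) as \(t \to 0\) for each \(\tilde u\) in the dense set, where \(\phi\) is built from the Lipschitz extension \(\tilde f\). But hypothesis (iii) only controls \(f\) at points of \(A \cap (a + \langle \tilde u \rangle)\); at parameters \(t\) with \(a + t\tilde u \notin A\) the extension is constrained only by its Lipschitz bound, and a Lipschitz extension carries no derivative information at \(a\) along the line. Your closing paragraph asserts that the extension together with the equi-Lipschitz control of \(\phi\) \enquote{transfers information from \(f\) to \(\tilde f\) where \(A \cap (a + \langle \tilde u\rangle)\) provides none}; this is precisely the step that fails, and it cannot be repaired in the stated generality. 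Take \(m=2\), \(n=1\), \(a=0\), \(V = \R \times \{0\}\), \(A = \{(t,t^2) \colon t \in \R\}\), \(f(t,t^2) = \abs{t}\) (which is \(1\)-Lipschitz on \(A\)) and \(L = 0\): conditions (i) and (iii) hold vacuously because \(A \cap (V + a) = \{0\}\), yet (ii) fails, since for small \(t\) the points \((t,t^2)\) lie in every sharp cone around \(V\) while \(\abs{f(t,t^2) - f(0)} = \abs{t} \ge \tfrac{1}{2}\norm{(t,t^2)}\). Moreover any Lipschitz extension satisfies \(\tilde f(t,0) = \abs{t} + O(t^2)\), so \(\phi(t e_1)/t \not\to 0\), confirming that the transfer step is false as stated.

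What the argument needs (and what the paper's own proof tacitly uses, since it freely evaluates \(f\) at \(a+v\) and at \(a + \norm{x-a}\,v\) for \(v \in V\)) is that the relevant points of \(a+V\) near \(a\) belong to \(A\). Under that reading your strategy does work and is close in spirit to the paper's: the paper proves (iii) \(\Rightarrow\) (i) by covering the compact set \(V \cap \partial B[0,1]\) with finitely many balls centered at points of the dense set and using the Lipschitz bound, and then (i) \(\Rightarrow\) (ii) by exactly the splitting through the value at \(a+v\) that you use in your third paragraph. Note also that your \enquote{density upgrade} from \(\phi(t\tilde u)/t \to 0\) on a dense set of directions to \(\phi(w) = o(\norm{w})\) uniformly on \(V\) itself requires the compactness of \(V \cap \partial B[0,1]\) (equicontinuity plus a finite net), which you should make explicit. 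As written, however, the proof of (iii) \(\Rightarrow\) (ii) rests on an unjustified, and in general false, transfer of derivative information to the extension.
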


\begin{proof}
Let \(\kappa\ge 0\) be given by the definition of Lipschitz continuity for the function \(f\). 
First we prove that \eqref{lip1} implies \eqref{lip2}. 
For every \(x\in A\) and every \(v\in V\), since \(f\) is Lipschitz continuous,
\begin{align*}
\norm{f(x)- f(a) - L[v]}
& \le \norm{f(x)-f(v+a)} + \norm{f(v+a) - f(a) - L[v]}\\
& \le \kappa\norm{x-a-v} + \norm{f(v+a) - f(a) - L[v]}.
\end{align*}
For every \(\varepsilon >0\), by definition of directional derivative, there exists \(\delta >0\) such that if \(y\in A \cap (V+a)\) and \(\norm{y-a} \le \delta\), then \(\norm{f(y)- f(a) - L[y-a]} \le \frac{\varepsilon}{4} \norm{y-a}\).
Therefore, if \(x\in A\), \(v\in V\), \(\norm{x-a}\le \frac{\delta}{2}\) and \(\norm{x-a-v} \le \min\bigl(1,\frac{\varepsilon}{2 \kappa}\bigr) \norm{x-a}\), then \(\norm{(v+a)-a} \le 2\norm{x-a} \le \delta\), and so \(\norm{f(x)- f(a) - L[v]} \le \varepsilon \norm{x-a}\).

Since it is obvious that \eqref{lip2} implies \eqref{lip3}, it remains to prove that \eqref{lip3} implies \eqref{lip1}.
For every \(x\in A \cap (V+a)\) and every \(v\in V\), 
\begin{align*}
& \norm{f(x)-f(a)-L[x-a]} \\
& \hspace{5mm} \le \norm{f(x)-f(\norm{x-a}v +a)- L[x-a-\norm{x-a}v]} \\
& \hspace{10mm} + \norm{f(\norm{x-a}v+a)-f(a)-L[\norm{x-a}v]} \\
& \hspace{5mm} \le (\kappa+\norm{L})\norm{x-a} \norm{ \frac{x-a}{\norm{x-a}} -v} +  \norm{f(\norm{x-a}v +a )-f(a)-L[\norm{x-a}v]}.
\end{align*}
By assumption, for every \(v\) in a dense subset \( S \subseteq V \cap \partial B[0,1]\), there exists \(\delta_v >0\) such that if \(y\in A \cap (\langle v \rangle +a)\) and \(\norm{y-a} \le \delta_v\), then \(\norm{f(y)- f(a) - L[y-a]} \le \frac{\varepsilon}{2} \norm{y-a}\). 
Here and in the sequel \(B[0,1]\) denotes the closed ball of radius \(1\) centered at the point \(0\).
Since the set \(V \cap \partial B[0,1]\) is compact and the set \(S\) is dense in \(V \cap \partial B[0,1]\), there exists \(N\in \N_*\) and \(v_1, \dots, v_N \in S\) such that 
\[
V \cap \partial B[0,1] \subseteq \bigcup_{i=1}^N B\left[v_i, \frac{\varepsilon}{2(1+\kappa + \norm{L})}\right].
\]
As a consequence, if \(x\in A \cap (V+a)\) and \(\norm{x-a}\le \min_{1\le i \le N} \delta_{v_i}\), then 
\[
\norm{f(x)-f(a)-L[x-a]} \le \varepsilon \norm{x-a}. \qedhere
\]
\end{proof}

The proof shows that the Lipschitz continuity assumption can be replaced by the weaker condition:
\[
\limsup_{\substack{\frac{\mathrm{dist}(x-a,V)}{\norm{x-a}} \to 0 \\ \frac{\mathrm{dist}(y-a,V)}{\norm{y-a}} \to 0}} \frac{\norm{f(x)-f(y)}}{\norm{x-y}} < \infty.
\]

\section{The chain rule for tangential derivatives}

We study now the chain rule for the tangential derivative.
We first remark that the chain rule \emph{does not} hold in general for the tangential derivative. For example, for every \(K\in \mathcal{L}(\R^m,\R)\) such that \(\{0\} \subsetneq \mathrm{ker}(K) \subsetneq \R^m\) and every \(\beta \ge 2\), we consider the functions \(f\colon \R^m \to \R\) and \(g \colon \R \to \R\) defined for every \(x\in \R^m\) and \(t\in \R\) by 
\begin{align*}
f(x) &= 
\begin{cases}
\frac{\abs{K[x]}^\beta}{\norm{x}} & \text{ if } x\neq 0,\\
0 & \text{ if } x = 0;
\end{cases}&
& \text{ and } &
g(t) &= \abs{t}^\frac{1}{\beta}.
\end{align*}
Since the zero linear map is a tangential derivative of the function \(f\) at \(0\) with respect to \(\mathrm{ker}(K)\), the function \(g\) has a tangential derivative at \(f(0)=0\) with respect to \(\{0\}\), but \(g \circ f\) has the same form as the function in \eqref{example} and so does not have a tangential derivative at \(0\) with respect to \(\mathrm{ker}(K)\). 

The chain rule holds however under an additional necessary and sufficient analytic assumption involving both functions in the composition.

\begin{propo}\label{chainRule}
Let \(A\subseteq \R^m\) and let \(V\subseteq \R^m\) be a linear subspace of \(\R^m\). 
Let \(L\in \mathcal{L}(V,\R^n)\) be a tangential derivative of \(f\colon A \to \R^n\) at \(a\in A\) with respect to \(V\) and let \(K\in \mathcal{L}(L[V], \R^p)\) be a tangential derivative of \(g\colon f(A) \to \R^p\) at \(f(a)\) with respect to \(L[V]\). 
The linear map \(K\circ L \in \mathcal{L}(V,\R^p)\) is a tangential derivative of \(g\circ f\) at \(a\) with respect to \(V\) if and only if 
\begin{equation}\label{CNS}
\lim_{\substack{v\in V \\ x \to a \\ \frac{\norm{x-a-v}}{\norm{x-a}} \to 0 \\ \frac{\norm{f(x)-f(a)}}{\norm{x-a}} \to 0 }} \frac{\norm{g(f(x))-g(f(a))}}{\norm{x-a}} =0.
\end{equation}
\end{propo}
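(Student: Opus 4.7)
I would prove the two implications of the biconditional separately, beginning with the easier direction.

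\emph{Necessity.} Assume $K\circ L$ is a tangential derivative of $g\circ f$ at $a$ with respect to $V$. Consider any approach $x\in A$, $v\in V$ satisfying the four limiting conditions in~\eqref{CNS}. The tangential differentiability of $f$ gives $\norm{f(x)-f(a)-L[v]}/\norm{x-a}\to 0$, which combined with the hypothesis $\norm{f(x)-f(a)}/\norm{x-a}\to 0$ yields $\norm{L[v]}/\norm{x-a}\to 0$ and in particular $\norm{K[L[v]]}/\norm{x-a}\to 0$. Along the same approach, the tangential differentiability of $g\circ f$ gives $\norm{g(f(x))-g(f(a))-K[L[v]]}/\norm{x-a}\to 0$, and the triangle inequality then produces~\eqref{CNS}.

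\emph{Sufficiency.} Given $\varepsilon>0$, I would produce the constants $\delta,\theta>0$ required by the tangential derivative definition by splitting on whether $\norm{f(x)-f(a)}\le\mu\norm{x-a}$, for a threshold $\mu>0$ to be chosen. The tangential differentiability of $f$ already supplies, on a cone $\norm{x-a-v}\le\theta\norm{x-a}$ near $a$, the a priori estimate $\norm{f(x)-f(a)}\le C\norm{x-a}$ with $C$ of the order $\norm{L}+1$. In the \emph{large} regime $\norm{f(x)-f(a)}>\mu\norm{x-a}$, the pair $y=f(x)$, $w=L[v]$ satisfies $\norm{y-f(a)-w}\le\varepsilon_f\norm{x-a}\le(\varepsilon_f/\mu)\norm{y-f(a)}$; choosing $\varepsilon_f\le\theta_K\mu$, where $\theta_K$ is the cone parameter delivered by $g$'s tangential derivative applied with target $\varepsilon/(2C)$, lets me invoke $g$'s tangential differentiability to bound $\norm{g(f(x))-g(f(a))-K[L[v]]}$ by $(\varepsilon/2)\norm{x-a}$. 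In the \emph{small} regime $\norm{f(x)-f(a)}\le\mu\norm{x-a}$, condition~\eqref{CNS} with target $\varepsilon/2$ bounds $\norm{g(f(x))-g(f(a))}$ by $(\varepsilon/2)\norm{x-a}$, while $\norm{K[L[v]]}\le\norm{K}(\mu+\varepsilon_f)\norm{x-a}$ is smaller than $\varepsilon/2\cdot\norm{x-a}$ provided $\mu$ and $\varepsilon_f$ are of order at most $\varepsilon/(4(1+\norm{K}))$.

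\emph{Main obstacle.} The delicate step is to fix the constants in a compatible order: $\mu$ must be small enough that $\norm{K}\mu$ is negligible in the small regime, yet not so small that $\varepsilon_f/\mu\le\theta_K$ becomes unattainable. One first fixes the targets $\varepsilon/(2C)$ for $g$ and $\varepsilon/2$ for condition~\eqref{CNS}, producing $\theta_K,\delta_K$ and a threshold $\mu_0$; then sets $\mu=\min(\mu_0,\varepsilon/(4(1+\norm{K})))$; and finally takes $\varepsilon_f\le\min(\theta_K\mu,\varepsilon/(4(1+\norm{K})))$. Only then can $\delta$ and $\theta$ be shrunk so that $f$'s tangential derivative delivers this accuracy $\varepsilon_f$ together with $\norm{f(x)-f(a)}\le\min(C\norm{x-a},\delta_K)$, ensuring that $g$'s tangential derivative is applicable in the large regime.
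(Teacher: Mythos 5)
Your proposal is correct and follows essentially the same route as the paper's proof: necessity via the same triangle-inequality decomposition, and sufficiency by splitting on whether \(\norm{f(x)-f(a)}\) is below or above a threshold multiple of \(\norm{x-a}\), using \eqref{CNS} in the small regime and, in the large regime, the a priori bound \(\norm{f(x)-f(a)}\le C\norm{x-a}\) together with the key observation that an \(f\)-error of size \(\varepsilon_f\norm{x-a}\) with \(\varepsilon_f\le\theta_K\mu\) yields the cone condition needed to apply the tangential derivative of \(g\) at \(f(a)\). Your bookkeeping of the constants (fixing the targets for \(g\) and for \eqref{CNS} first, then \(\mu\), then \(\varepsilon_f\), then shrinking \(\delta,\theta\)) matches the paper's choice of \(\kappa\), \(\eta\), \(\Tilde\delta\), \(\Tilde\theta\), \(\delta_2\), \(\theta_2\).
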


\begin{proof}
We first assume that the function \(g\circ f\) is tangentially differentiable. For every \(x\in A\) and every \(v\in V\),
\begin{multline*}
\norm{g(f(x))-g(f(a))} 
\le \norm{(g\circ f)(x)-(g\circ f)(a) - (K\circ L)[v]} \\
 + \norm{K}(\norm{f(x)-f(a)-L[v]} + \norm{f(x)-f(a)}). 
\end{multline*}
We fix \(\varepsilon >0\). 
By assumption, there exist \(\delta, \theta >0\) such that if \(x\in A\), \(v\in V\), \(\norm{x-a}\le \delta\) and \(\norm{x-a-v}\le \theta \norm{x-a}\), then
\begin{gather*}
\norm{(g\circ f)(x)-(g\circ f)(a) - (K\circ L)[v]} \le \varepsilon \norm{x-a}
\intertext{and}
\norm{f(x)-f(a)-L[v]} \le \varepsilon \norm{x-a}.
\end{gather*}
As a consequence, if \(x\in A\), \(v\in V\), \(\norm{x-a}\le \delta\), \(\norm{x-a-v}\le \theta \norm{x-a}\) and \(\norm{f(x)-f(a)}\le \varepsilon\norm{x-a}\),
then
\[
\norm{g(f(x))-g(f(a))} \le (1+2\norm{K})\varepsilon \norm{x-a}, 
\]
that is, \eqref{CNS} is satisfied. 

Conversely, we assume that \eqref{CNS} holds. For every \(x\in A\) and every \(v\in V\),
\begin{equation}
\label{ayie}
\begin{aligned}
& \norm{(g\circ f)(x) - (g\circ f)(a) - (K\circ L)[v]} \\
& \hspace{10mm} \le \norm{g(f(x))-g(f(a))} + \norm{K}(\norm{f(x)-f(a)-L[v]} + \norm{f(x)-f(a)}). 
\end{aligned}
\end{equation}
We fix \(\varepsilon >0\). 
By assumption, there exist \(\delta_1, \theta_1 >0\) and \(\kappa >0\) such that if \(x\in A\), \(v\in V\), \(\norm{x-a}\le \delta_1\), \(\norm{x-a-v}\le \theta_1 \norm{x-a}\) and \(\norm{f(x)-f(a)}\le \kappa \norm{x-a}\), then  
\begin{align*}
 \norm{g(f(x))-g(f(a))} \le \varepsilon \norm{x-a} \text{ and }\norm{f(x)-f(a)-L[v]} \le \varepsilon \norm{x-a},
\end{align*}
and thus by \eqref{ayie},
\[
\norm{(g\circ f)(x) - (g\circ f)(a) - (K\circ L)[v]}  \le (\varepsilon (1+\norm{K}) + \kappa) \norm{x-a}.
\]
Without loss of generality, we assume that \(\kappa \le \varepsilon\), and thus if \(\norm{x-a}\le \delta_1\), \(\norm{x-a-v}\le \theta_1 \norm{x-a}\) and \(\norm{f(x)-f(a)}\le \kappa \norm{x-a}\), then 
\[
\norm{(g\circ f)(x) - (g\circ f)(a) - (K\circ L)[v]}  \le (2+\norm{K}) \varepsilon\norm{x-a}.
\]
It remains to cover the case where \(x\in A\) and \(\norm{f(x)-f(a)} > \kappa \norm{x-a}\). 
First, for every \(x\in A\) and every \(v\in V\) such that \(\norm{x-a} \le \delta_1\) and \(\norm{x-a-v}\le \theta_1 \norm{x-a}\), we have
\[
\norm{f(x)-f(a)} \le (\varepsilon + \norm{L}( 1+\theta_1))\norm{x-a}.
\] 
Let \(\eta = \varepsilon + \norm{L}( 1+\theta_1)\).
Since \(g\) is tangentially differentiable, there exist \(\Tilde{\delta}, \Tilde{\theta} >0\) such that if \(x\in A\), \(v\in V\), \(\norm{f(x)-f(a)}\le \Tilde{\delta}\) and \(\norm{f(x)-f(a)-L[v]}\le \Tilde{\theta} \norm{f(x)-f(a)}\), then
\[
\norm{g(f(x))-g(f(a)) - K[L[v]] } \le \frac{\varepsilon}{\eta} \norm{f(x)-f(a)}.
\]
Since \(L\) is a tangential derivative of \(f\), there exist \(\delta_2, \theta_2>0\) such that if \(x\in A\), \(v\in V\), \(\norm{x-a} \le \delta_2\) and \(\norm{x-a-v}\le \theta_2 \norm{x-a}\), then
\[
\norm{f(x) - f(a) - L[v]} \le \Tilde{\theta} \kappa \norm{x-a}.
\]
Therefore, if \(x\in A\), \(v\in V\), \(\norm{x-a}\le \min(\delta_1, \delta_2, \tfrac{\Tilde{\delta}}{\eta})\) and 
\[
\norm{x-a-v}\le \min(\theta_1,\theta_2)\norm{x-a},
\]
then
\(\norm{f(x)-f(a)} \le \Tilde{\delta}\) and \(\norm{f(x)-f(a)-L[v]} \le \Tilde{\theta}\norm{f(x)-f(a)}\), and so
\[
\norm{(g\circ f)(x) - (g\circ f)(a) - (K\circ L)[v]} \le \frac{\varepsilon}{\eta}\norm{f(x)-f(a)} \le \varepsilon \norm{x-a},
\]
that is, the function \(g\circ f\) is tangentially differentiable at \(a\).
\end{proof}

A first interesting consequence is a chain rule when the second function \(g\) in the composition is Lipschitz continuous. 
\begin{cor}
Let \(A\subseteq \R^m\) and let \(V\subseteq \R^m\) be a linear subspace of \(\R^m\). 
Let \(L\in \mathcal{L}(V,\R^n)\) be a tangential derivative of \(f\colon A \to \R^n\) at \(a\in A\) with respect to \(V\) and let \(K \in \mathcal{L}(L[V],\R^p)\) be a tangential derivative of \(g\colon f(A) \to \R^p\) at \(f(a)\) with respect to \(L[V]\). 
If the function \(g\) is Lipschitz continuous, then \(K \circ L\in \mathcal{L}(V,\R^p)\) is a tangential derivative of \(g\circ f\) at \(a\) with respect to \(V\). 
\end{cor}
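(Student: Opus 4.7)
The plan is to reduce the statement to Proposition~\ref{chainRule} by verifying the analytic condition \eqref{CNS}. Since both $L$ and $K$ are already assumed to be tangential derivatives of $f$ and $g$ respectively, Proposition~\ref{chainRule} tells us that $K\circ L$ will be a tangential derivative of $g\circ f$ at $a$ with respect to $V$ precisely when
\[
\lim_{\substack{v\in V \\ x \to a \\ \frac{\norm{x-a-v}}{\norm{x-a}} \to 0 \\ \frac{\norm{f(x)-f(a)}}{\norm{x-a}} \to 0 }} \frac{\norm{g(f(x))-g(f(a))}}{\norm{x-a}} = 0.
\]
So the only thing that needs checking is this limit.

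The verification is immediate from the Lipschitz assumption on $g$. If $\kappa \ge 0$ is a Lipschitz constant for $g$, then for every $x \in A$,
\[
\norm{g(f(x)) - g(f(a))} \le \kappa \norm{f(x) - f(a)}.
\]
Dividing by $\norm{x-a}$ and taking the limit in the sense of \eqref{CNS}, the right-hand side tends to $0$ because the very definition of that limit enforces $\frac{\norm{f(x)-f(a)}}{\norm{x-a}} \to 0$. Hence condition \eqref{CNS} holds and Proposition~\ref{chainRule} applies to give the conclusion.

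There is essentially no obstacle here; the corollary is just the specialization of Proposition~\ref{chainRule} to the Lipschitz case, where the extra hypothesis on $g$ trivially controls the quotient $\norm{g(f(x))-g(f(a))}/\norm{x-a}$ by $\norm{f(x)-f(a)}/\norm{x-a}$. One could also note, for the sake of symmetry with Proposition~\ref{lipschitz}, that the same argument works under the weaker one-sided assumption that $g$ satisfies a local Lipschitz estimate from $f(a)$, namely $\limsup_{y \to f(a)} \norm{g(y)-g(f(a))}/\norm{y-f(a)} < \infty$, which is all that is actually used.
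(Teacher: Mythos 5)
Your proof is correct and follows the same route as the paper: both verify condition \eqref{CNS} by bounding \(\norm{g(f(x))-g(f(a))}/\norm{x-a}\) by \(\kappa\,\norm{f(x)-f(a)}/\norm{x-a}\) via the Lipschitz constant of \(g\) and then invoking proposition~\ref{chainRule}. Your closing remark that only a local Lipschitz-type estimate of \(g\) at \(f(a)\) is needed is a fine observation, but it does not change the argument.
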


\begin{proof}
If \(\kappa\ge 0\) is a Lipschitz constant for \(g\), condition \eqref{CNS} is satisfied since for every \(x\in A\setminus\{a\}\), 
\[
0 \le \frac{\norm{g(f(x))-g(f(a))}}{\norm{x-a}} 
= \frac{\norm{g(f(x))-g(f(a))}}{\norm{f(x)-f(a)}} \frac{\norm{f(x)-f(a)}}{\norm{x-a}} 
\le \kappa \frac{\norm{f(x)-f(a)}}{\norm{x-a}}. \qedhere
\]
\end{proof}

Another consequence which will be important in the sequel is the case where the tangential derivative of the first function \(f\) in the composition is injective. 

\begin{cor}\label{stableDiffeo}
Let \(A\subseteq \R^m\) and let \(V\subseteq \R^m\) be a linear subspace of \(\R^m\). 
Let \(L\in \mathcal{L}(V,\R^n)\) be a tangential derivative of \(f\colon A \to \R^n\) at \(a\in A\) with respect to \(V\) and let \(K\in \mathcal{L}(L[V], \R^p)\) be a tangential derivative of \(g\colon f(A) \to \R^p\) at \(f(a)\) with respect to \(L[V]\). 
If \(L\) is injective, then \(K \circ L\in \mathcal{L}(V,\R^p)\) is a tangential derivative of \(g\circ f\) at \(a\) with respect to \(V\).
\end{cor}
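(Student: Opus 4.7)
The plan is to invoke Proposition~\ref{chainRule}, so the goal is to check that condition \eqref{CNS} is trivially (vacuously) satisfied whenever $L$ is injective. The key point is that injectivity of $L$ on the finite-dimensional subspace $V$ yields a constant $c>0$ such that $\norm{L[v]} \ge c \norm{v}$ for every $v \in V$. This will prevent $\frac{\norm{f(x)-f(a)}}{\norm{x-a}}$ from ever approaching $0$ along directions used in \eqref{CNS}.

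More concretely, first I would apply the tangential differentiability of $f$ with a conveniently small $\varepsilon$, say $\varepsilon = c/4$: this produces $\delta>0$ and $\theta>0$ such that, whenever $x\in A$, $v\in V$, $\norm{x-a}\le \delta$ and $\norm{x-a-v}\le \theta \norm{x-a}$, one has
\[
\norm{f(x)-f(a)-L[v]} \le \tfrac{c}{4}\norm{x-a}.
\]
Shrinking $\theta$ if necessary to guarantee $\theta \le 1/2$, the reverse triangle inequality gives $\norm{v} \ge (1-\theta)\norm{x-a} \ge \tfrac{1}{2}\norm{x-a}$, whence
\[
\norm{f(x)-f(a)} \ge \norm{L[v]} - \tfrac{c}{4}\norm{x-a} \ge c \norm{v} - \tfrac{c}{4}\norm{x-a} \ge \tfrac{c}{4}\norm{x-a}.
\]

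This bound shows that in the region of $(x,v)$ governing the limit in \eqref{CNS}, the ratio $\frac{\norm{f(x)-f(a)}}{\norm{x-a}}$ stays bounded below by $c/4$. Hence the constraint $\frac{\norm{f(x)-f(a)}}{\norm{x-a}} \to 0$ forces the filter of $(x,v)$ under consideration to be empty, and the limit in \eqref{CNS} holds vacuously. Proposition~\ref{chainRule} then yields that $K\circ L$ is a tangential derivative of $g \circ f$ at $a$ with respect to $V$.

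The only mild subtlety is justifying the bound $\norm{L[v]} \ge c \norm{v}$; this follows at once from compactness of the unit sphere of $V$ combined with the injectivity of the linear map $L$ on the finite-dimensional space $V$. Everything else is a clean application of the chain rule criterion already proved.
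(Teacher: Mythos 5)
Your proof is correct and follows essentially the same route as the paper: both use the finite-dimensional injectivity bound $\norm{L[v]}\ge c\norm{v}$ together with the tangential differentiability of $f$ (with a small $\varepsilon$) to deduce $\norm{f(x)-f(a)}\ge \tfrac{c}{4}\norm{x-a}$ in the relevant sharp cone, so that condition \eqref{CNS} holds vacuously and Proposition~\ref{chainRule} applies. The only cosmetic difference is that the paper bounds $\norm{x-a}$ from above via the chain $\norm{x-a}\le\norm{x-a-v}+\eta(\norm{f(x)-f(a)}+\norm{f(x)-f(a)-L[v]})$, while you argue with the reverse triangle inequality; these are equivalent.
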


\begin{proof}
Since the linear map \(L\) is injective on its domain \(V\), there exists a constant \(\eta >0\) such that for every \(v\in V\), \(\norm{v} \le \eta \norm{L[v]}\). 
For every \(x\in A\) and every \(v\in V\), 
\begin{align*}
\norm{x-a} & \le \norm{x-a-v} + \eta \norm{L[v]} \\
&\le \norm{x-a-v} + \eta(\norm{f(x)-f(a)} + \norm{f(x)-f(a)-L[v]}). 
\end{align*}
Since \(L\) is a tangential derivative of \(f\), there exist \(\delta > 0\) and \(\theta>0\) such that if \(x\in A\), \(v\in V\), \(\norm{x-a} \le \delta\) and \(\norm{x-a-v} \le \theta\norm{x-a}\), then \(\norm{f(x)-f(a)-L[v]} \le \tfrac{1}{4\eta} \norm{x-a}\). 
Therefore, if \(\norm{x-a} \le \delta\) and \(\norm{x-a-v} \le \min\bigl(\theta,\tfrac{1}{4}\bigr)\norm{x-a}\), then \(\tfrac{3}{4}\norm{x-a} \le \eta \norm{f(x)-f(a)} + \norm{x-a-v}\), and so \(\norm{x-a} \le 2 \eta\norm{f(x)-f(a)}\). Hence, the condition \eqref{CNS} is trivially satisfied. 
\end{proof}

The previous chain rule shows the invariance under diffeomorphisms. As a consequence, we extend the notion of tangential derivative to functions on differentiable manifolds.
We recall that a \emph{local chart \(\varphi\)} of a differentiable manifold \(M\) of dimension \(m\) is a map \(\varphi \colon U \subseteq M \to \R^m\) such that \(U\) is an open subset of \(M\) and \(\varphi\colon U \to \varphi(U)\) is a diffeomorphism. For every \(p\in M\), we also denote by \(T_p M\) the \emph{tangent space at the point \(p\)}. 

\begin{de}\label{defManifold}
Let \(M\) be a differentiable manifold of dimension \(m\). Let \(p\in M\) and let \(V\) be a linear subspace of \(T_p M\). 
A linear map \(L \in \mathcal{L}(V,\R)\) is a \emph{tangential derivative} of a map \(f\colon M\to \R\) at \(p\) with respect to \(V\) if for some local chart \(\varphi \colon U\subseteq M \to \R^m\) such that \(p\in U\), \(L \circ D\varphi^{-1}(\varphi(p)) \in \mathcal{L}(D\varphi(p)[V], \R)\) is a tangential derivative of \(f\circ \varphi^{-1}\colon \varphi(U) \to \R^n\) at \(\varphi(p)\) with respect to \(D\varphi(p)[V]\). 
\end{de}
By corollary~\ref{stableDiffeo}, the previous definition is independent of the local chart \(\varphi\). 

\section{Characterizations of tangential differentiability}\label{chara}
In this section we characterize tangential differentiability as the weakest notion which implies directional differentiability and which is invariant under diffeomorphisms. It is also the weakest notion for which there is a chain rule with respect to nondegenerate paths with a linear dependence on the direction. Finally, tangential differentiability with respect to a subspace is equivalent with tangential differentiability with respect to every direction.
\begin{propo}\label{characterization}
Let \(a \in A\subseteq \R^m\), let \(f\colon A \to \R^n\), let \(V\subseteq \R^m\) be a linear subspace of \(\R^m\) and \(L\in \mathcal{L}(V,\R^n)\). 
The following statements are equivalent:
\begin{enumerate}[(i)]
\item \label{char1} the linear map \(L\) is a tangential derivative of \(f\) at \(a\) with respect to \(V\),
\item \label{char2} for each map \(\psi \colon U \subseteq \R^m \to \R^m\) such that \(U\subseteq A\) is open and \(\psi\colon U \to \psi(U)\) is a diffeomorphism, \(L \circ D\psi^{-1}(\psi(a)) \in \mathcal{L}(D\psi(a)[V], \R^n)\) is a directional derivative of \(f \circ \psi^{-1}\) at \(\psi(a)\) with respect to \(D\psi(a)[V]\),
\item \label{char3} for each path \(\gamma \in C^1((-1,1),\R^m)\) such that \(\gamma(0) = a\) and \(\gamma'(0)\in V\setminus \{0\}\),
the composite function \(f\circ \gamma\) is differentiable at \(0\) and \((f\circ \gamma)'(0) = L[\gamma'(0)]\),
\item \label{char4} for each \(v\in V \setminus \{0\}\), 
\begin{equation*}
\lim_{\stackrel{x\to a}{\frac{x-a}{\norm{x-a}} \to v }} \frac{f(x)-f(a) - \norm{x-a}L[v]}{\norm{x-a}} = 0.
\end{equation*}
\end{enumerate}
\end{propo}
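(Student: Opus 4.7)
The plan is to establish the cyclic chain \eqref{char1} $\Rightarrow$ \eqref{char2} $\Rightarrow$ \eqref{char3} $\Rightarrow$ \eqref{char4} $\Rightarrow$ \eqref{char1}. The first step is an immediate application of Corollary~\ref{stableDiffeo}: given a diffeomorphism $\psi\colon U\to\psi(U)$ with $U\subseteq A$ open, the inverse $\psi^{-1}$ is classically (hence tangentially) differentiable at $\psi(a)$ with injective derivative $D\psi^{-1}(\psi(a))$, and one checks that $D\psi^{-1}(\psi(a))[D\psi(a)[V]]=V$. Viewing $f\circ\psi^{-1}$ as the composition of $f$ with the inner map $\psi^{-1}$, Corollary~\ref{stableDiffeo} delivers $L\circ D\psi^{-1}(\psi(a))$ as a tangential, hence directional, derivative of $f\circ\psi^{-1}$ at $\psi(a)$ with respect to $D\psi(a)[V]$.

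For \eqref{char2} $\Rightarrow$ \eqref{char3}, given a $C^1$ path $\gamma$ with $\gamma(0)=a$ and $v:=\gamma'(0)\in V\setminus\{0\}$, the strategy is to promote $\gamma$ to a local diffeomorphism. Completing $v$ to a basis $v,v_2,\ldots,v_m$ of $\R^m$, one sets $\phi(t,s_2,\ldots,s_m):=\gamma(t)+s_2v_2+\cdots+s_mv_m$; since $D\phi(0)$ is invertible, the inverse function theorem yields a local diffeomorphism whose inverse $\psi$ is defined on an open neighborhood of $a$ inside $A$. Applying \eqref{char2} to $\psi$ produces a directional derivative of $f\circ\psi^{-1}=f\circ\phi$ at $\psi(a)=0$ along $D\psi(a)[V]$, a subspace containing $e_1$ because $D\phi(0)[e_1]=v\in V$. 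Specializing the directional limit to the ray $t\mapsto te_1$ then yields the differentiability of $f\circ\gamma$ at $0$ with derivative $L[v]$.

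The step \eqref{char3} $\Rightarrow$ \eqref{char4} I would handle by contradiction. If \eqref{char4} failed for some unit $v\in V$, one would obtain $\varepsilon>0$ and a sequence $x_n\to a$ with $(x_n-a)/\norm{x_n-a}\to v$ satisfying $\norm{f(x_n)-f(a)-\norm{x_n-a}L[v]}>\varepsilon\norm{x_n-a}$. Writing $t_n:=\norm{x_n-a}$ and extracting a rapidly decreasing subsequence, one would smoothly interpolate the remainders $x_n-a-t_nv=o(t_n)$ to produce a $C^1$ curve $\gamma\colon(-1,1)\to\R^m$ with $\gamma(0)=a$, $\gamma'(0)=v$ and $\gamma(t_n)=x_n$. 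Then \eqref{char3} would force $(f(x_n)-f(a))/t_n\to L[v]$, contradicting the choice of $x_n$. I expect the explicit construction of this interpolating $C^1$ curve to be the main obstacle of the whole argument.

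Finally, \eqref{char4} $\Rightarrow$ \eqref{char1} is a short compactness argument: negating \eqref{char1} yields $\varepsilon>0$ and sequences $x_n\to a$, $v_n\in V$ with $\norm{x_n-a-v_n}/\norm{x_n-a}\to 0$ and $\norm{f(x_n)-f(a)-L[v_n]}>\varepsilon\norm{x_n-a}$. A subsequential limit $u$ on the unit sphere of $(x_n-a)/\norm{x_n-a}$ coincides with the limit of $v_n/\norm{x_n-a}$, forcing $u\in V$. Combining \eqref{char4} for $u$ with the continuity estimate $L[v_n]=\norm{x_n-a}L[u]+o(\norm{x_n-a})$ produces the desired contradiction.
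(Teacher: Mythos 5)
Your proposal follows the same cyclic scheme as the paper and, for three of the four implications, essentially the same arguments. The step \eqref{char1}\(\Rightarrow\)\eqref{char2} is exactly the paper's: apply corollary~\ref{stableDiffeo} to the inner map \(\psi^{-1}\), whose derivative at \(\psi(a)\) is injective. For \eqref{char2}\(\Rightarrow\)\eqref{char3}, your map \(\phi(t,s_2,\dots,s_m)=\gamma(t)+s_2v_2+\dots+s_mv_m\) is the same flattening device as the paper's \(\Tilde{\psi}(y_1,y'')=\gamma(y_1)+(0,y'')\), written in a basis adapted to \(v\) instead of after arranging the first component of \(\gamma\) to be injective; both then read off the derivative of \(f\circ\gamma\) by restricting the directional derivative of \(f\circ\psi^{-1}\) to the line through \(e_1\), which is legitimate since \(D\psi(a)[v]=e_1\). (Your claim that the domain of \(\psi\) sits \emph{inside} \(A\) is not automatic, since \(A\) need not be open, but the paper's own proof takes the same liberty with domains, so this is a shared convention rather than a defect of your argument.) For \eqref{char4}\(\Rightarrow\)\eqref{char1} you argue by contradiction and sequential compactness of the unit sphere, whereas the paper argues directly by covering the compact set \(V\cap B[0,2]\) with finitely many balls; the two are equivalent, and your version is correct as stated: the subsequential limit \(u\) is a unit vector of \(V\) because \(v_n/\norm{x_n-a}\to u\) and \(V\) is closed, and boundedness of \(L\) gives \(L[v_n]=\norm{x_n-a}L[u]+o(\norm{x_n-a})\).

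The one genuine gap is precisely the step you flag and leave unexecuted: the existence of a \(C^1\) curve through the bad points, which is the technical heart of \eqref{char3}\(\Rightarrow\)\eqref{char4} and occupies the paper's lemma~\ref{polynome}. Your sketch has the right ingredients (set \(t_n=\norm{x_n-a}\), pass to a subsequence with \(t_{n+1}\le\alpha t_n\) for some \(\alpha<1\), use \(x_n-a-t_nv=o(t_n)\)), but \enquote{smoothly interpolate} needs a proof: piecewise linear interpolation is not \(C^1\), and an arbitrary \(C^1\) interpolation need not satisfy \(\gamma'(t)\to v\) as \(t\to0^+\), which is what differentiability at \(0\) with \(\gamma'(0)=v\) requires. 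The paper's construction uses cubic Hermite data on each interval \([t_{n+1},t_n]\), namely \(\gamma(t)=x_{n+1}+(x_n-x_{n+1})\,p\bigl(\frac{t-t_{n+1}}{t_n-t_{n+1}}\bigr)+(t_n-t_{n+1})\,q\bigl(\frac{t-t_{n+1}}{t_n-t_{n+1}}\bigr)v\) with \(p(s)=3s^2-2s^3\) and \(q(s)=s-p(s)\), so that \(\gamma\) takes the value \(x_n\) and has derivative exactly \(v\) at every node (hence is \(C^1\) across nodes), and then the estimate \(\norm{\gamma'(t)-v}\le\frac32\bignorm{\frac{x_n-a}{t_n}-v}+\frac{1}{1-\alpha}\bignorm{\frac{x_n-a}{t_n}-\frac{x_{n+1}-a}{t_{n+1}}}\), which is where the geometric gap \(t_{n+1}\le\alpha t_n\) enters, shows \(\gamma'\) extends continuously to \(0\). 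One also has to extend the curve to negative times (the paper glues \(\gamma(t)=a+tv\) on \((-1,0]\), which matches \(C^1\) at \(0\)) so that it qualifies as a path in \eqref{char3}. Until this interpolation is written out, the implication \eqref{char3}\(\Rightarrow\)\eqref{char4}, and hence the proposition, is not proved; with it, your plan goes through exactly as in the paper.
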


If \(V= \R^m\), the equivalence between statements \eqref{char1}, \eqref{char3} and \eqref{char4} are well-known in finite dimensional theory. Indeed, statement \eqref{char3} is equivalent to Hadamard differentiability (\citelist{\cite{fichmann}*{teorema 1} \cite{penot73}\cite{shapiro}*{definition 2.2}}) and so it is equivalent --- since we are working in finite dimension --- to Fréchet--Stolz differentiability \eqref{char1}. Furthermore, statement \eqref{char4} is reminiscent to Bastiani--Michal differentiability \citelist{\cite{bastiani}*{d\'efinition 3.2}\cite{michal}\cite{penot}*{definition 1.1}} which is also equivalent to Hadamard differentiability. 

In comparison with Lipschitz continuous functions (proposition \ref{lipschitz}\eqref{lip1}), it is important in the previous statement \eqref{char4} to assume the property for \emph{every} \(v\in V\setminus\{0\}\) as it can be observed by taking \((v_n)_{n\in\N}\) a dense subset of \(\partial B[0,1] \subseteq \R^m\) and the function \(f \colon \R^m \to \R\) defined for every \(x\in \R^m\) by 
\[
f(x)=
\begin{cases}
0 & \text{ if } x \in \{ y \in \R^m \colon \mathrm{dist}(y,\langle v_n \rangle) \le \frac{1}{2^{n + 2}} \norm{y}\}, \\
1 & \text{ otherwise}.
\end{cases}
\]
Furthermore, property \eqref{char4} is well-known to fail for directional derivatives. 

Given a differentiable manifold \(M\) of dimension \(m\) and a function \(f \colon M \to \R\), the fact that \eqref{char4} implies \eqref{char1} gives us a sufficient condition for \(f\) to be tangentially differentiable at \(p\in M\) with respect to a linear subspace \(V \subseteq T_p M\). Indeed, if there exists \(L\in \mathcal{L}(V, \R)\) which is a tangential derivative of \(f\) at \(p\) with respect to \(\langle v \rangle\) for every \(v \in V\), that is, according to definition~\ref{defManifold}, for all local charts \(\varphi\colon U \subseteq M\to \R^m\) such that \(p\in U\) and for all \(w \in D \varphi(p)[V]\), 
\[
\frac{d}{dt} (f\circ\varphi^{-1})(\varphi(p) + t w)_{\arrowvert t=0} = \bigl(L\circ D\varphi^{-1}(\varphi(p))\bigr)[w],
\]
then \(f\) is tangentially differentiable at \(p\) with respect to \(V\). 

The proof of proposition~\ref{characterization} relies on the construction of an interpolation path between points of a sequence (for a similar construction, see \cite{delfour}*{theorem 2.6}).

\begin{lemme}\label{polynome}
Let \((x_n)_{n\in \N} \subseteq \R^m\) be a sequence that converges to \(a\in \R^m\). 
Let \((t_n)_{n\in \N} \subseteq (0,1)\) be a decreasing sequence such that \(\limsup_{n\to \infty} \tfrac{t_{n+1}}{t_n} < 1\). 
If the sequence \((\frac{x_n -a}{t_n})_{n\in \N}\) converges to \(v \in \R^m\), then there exists a path \(\gamma \in C^1([0,t_0],\R^m)\) such that \(\gamma(0) = a\), \(\gamma'(0) = v\) and for all \(n\in \N\), \(\gamma(t_n) = x_n\). 
\end{lemme}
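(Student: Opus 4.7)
The plan is to construct $\gamma$ by concatenating cubic Hermite interpolants on the intervals $[t_{n+1},t_n]$, prescribing at every node the same derivative value $v$; this makes the glued curve automatically $C^{1}$ on $(0,t_{0}]$ and reduces the problem to analysing its behaviour at the endpoint $0$.

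Using the hypothesis $\limsup_{n\to\infty} t_{n+1}/t_n < 1$, I would first fix $\rho\in(0,1)$ and $N\in\N$ such that $t_{n+1}\le \rho\, t_n$ for every $n\ge N$; equivalently, $h_n := t_n - t_{n+1}\ge (1-\rho)\, t_n$. On each $[t_{n+1},t_n]$ I take $\gamma$ to be the unique cubic polynomial satisfying $\gamma(t_n)=x_n$, $\gamma(t_{n+1})=x_{n+1}$, $\gamma'(t_n)=\gamma'(t_{n+1})=v$, and set $\gamma(0):=a$. Since the derivative at each interior node is $v$ from both sides, $\gamma$ is $C^{1}$ on $(0,t_{0}]$ and the interpolation conditions $\gamma(t_n)=x_n$ hold by construction.

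The remaining task is continuity at $0$, differentiability at $0$ with $\gamma'(0)=v$, and continuity of $\gamma'$ at $0$. Setting $\psi(t):=\gamma(t)-a-t v$, these amount to showing $\psi(t)/t\to 0$ and $\psi'(t)\to 0$ as $t\to 0^+$. With the Hermite basis $H_{00}(s)=1-3s^{2}+2s^{3}$, $H_{01}(s)=3s^{2}-2s^{3}$ and the local variable $s=(t-t_{n+1})/h_n\in[0,1]$, the conditions $\psi'(t_n)=0$ give on each $[t_{n+1},t_n]$
\[
\psi(t) = H_{00}(s)\, t_{n+1}\,\varepsilon_{n+1} + H_{01}(s)\, t_n\,\varepsilon_n,
\]
where $\varepsilon_n:=(x_n-a)/t_n - v\to 0$, together with an analogous expression for $\psi'$ carrying an extra factor $1/h_n$.

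The main obstacle is bounding $\psi(t)/t$ in the regime $t_{n+1}/t_n\to 0$, since then $t_n/t$ can be arbitrarily large for $t$ near $t_{n+1}$ and the naive bound $|\psi(t)|\le (t_{n+1}+t_n)\max(|\varepsilon_n|,|\varepsilon_{n+1}|)$ is insufficient. The key observation is that $H_{01}(s)\le 3s$ on $[0,1]$ and $t\ge s\, h_n$, so
\[
\frac{H_{01}(s)\, t_n}{t}\le \frac{3 s\, t_n}{s\, h_n}\le \frac{3}{1-\rho},
\]
while $H_{00}(s)\, t_{n+1}/t\le 1$ follows from $H_{00}\le 1$ and $t\ge t_{n+1}$. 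Together these give $|\psi(t)|/t\le C\max(|\varepsilon_n|,|\varepsilon_{n+1}|)$. A parallel and easier estimate for $\psi'$, using boundedness of $H_{00}',H_{01}'$ on $[0,1]$ and $t_n/h_n\le 1/(1-\rho)$, yields $|\psi'(t)|\le C\max(|\varepsilon_n|,|\varepsilon_{n+1}|)$. Both right-hand sides tend to $0$ as $n\to\infty$, delivering all the required limits at $0$ at once.
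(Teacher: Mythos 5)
Your construction is exactly the paper's: the piece \(x_{n+1}+(x_n-x_{n+1})\,p\bigl(\tfrac{t-t_{n+1}}{t_n-t_{n+1}}\bigr)+(t_n-t_{n+1})\,q\bigl(\tfrac{t-t_{n+1}}{t_n-t_{n+1}}\bigr)v\) with \(p(s)=3s^2-2s^3\) and \(q(s)=s-p(s)\) used in the paper is precisely the cubic Hermite interpolant with values \(x_{n+1},x_n\) and derivative \(v\) at both nodes, and your estimates (using \(t_{n+1}\le\rho t_n\) to control \(t_n/(t_n-t_{n+1})\)) are correct. The only difference is cosmetic: you verify \(\gamma'(0)=v\) directly via the bound on \(\psi(t)/t\), whereas the paper proves continuity of \(\gamma\) at \(0\) and shows \(\gamma'(t)\to v\), deducing differentiability at \(0\) from the limit of the derivative.
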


\begin{proof}
We take \(p\) and \(q\) to be polynomials of degree \(3\) such that \(p(0)=0\), \(p(1)=1\) and \(p'(0)=p'(1)=0\), and \(q(0) = q(1)=0\) and \(q'(0)=q'(1) = 1\), that is, for every \(s\in \R\), \(p(s)=3s^2-2s^3\) and \(q(s) = s- p(s)\). We define the path \(\gamma \colon [0,t_0] \to \R^m\) for each \(t\in [0,t_0]\) by 
\[
\gamma(t) =
\begin{cases}
x_{n+1} + \left(x_n - x_{n+1}\right)p\left(\frac{t-t_{n+1}}{t_n -t_{n+1}}\right) + \left(t_n - t_{n+1}\right)q\left(\frac{t-t_{n+1}}{t_n -t_{n+1}}\right)v & \text{ if } t_{n+1} < t \le t_n, \\
a  & \text{ if } t = 0.
\end{cases}
\]
It is clear that \(\gamma\in C^1((0,t_0],\R^m)\). 
Since \(0 \le p\le 1\) on \([0,1]\) and \(\sup_{s\in[0,1]}\abs{q(s)}\le 2\), for every \(t\in (t_{n+1},t_n]\), 
\begin{equation*}
\begin{split}
\norm{\gamma(t)-\gamma(0)} 
& \le \norm{x_{n+1}-a} + \norm{x_n - x_{n+1}} \bigabs{p\bigl(\tfrac{t-t_{n+1}}{t_n -t_{n+1}}\bigr)} +   \abs{t_n-t_{n+1}}\bigabs{q\bigl(\tfrac{t-t_{n+1}}{t_n -t_{n+1}}\bigr)}\norm{v}\\
& \le \norm{x_{n+1}-a} + \norm{x_n - x_{n+1}} + 2 \abs{t_n-t_{n+1}}\norm{v}.
\end{split}
\end{equation*}
It follows that \(\gamma\) is continuous at \(0\). 
Next, since the sequence \((t_n)_{n\in \N}\) is decreasing and \(\limsup_{n\to \infty} \tfrac{t_{n+1}}{t_n}\) \( < 1\), there exists \(\alpha \in (0,1)\) such that for every \(n\in \N\), \(0 < t_{n+1} \le \alpha \, t_n\). 
For every \(t\in (t_{n+1},t_n]\), since \(p'+q' = 1\) on \([0,1]\), 
\begin{equation*}
\begin{split}
\norm{\gamma'(t) - v} & = \bigabs{p'\bigl(\tfrac{t-t_{n+1}}{t_n -t_{n+1}}\bigr)}\bignorm{\tfrac{x_n - x_{n+1}}{t_n-t_{n+1}} -v} \\
& \le \bigabs{p'\bigl(\tfrac{t-t_{n+1}}{t_n -t_{n+1}}\bigr)}\bignorm{\tfrac{x_n -a}{t_n} - v}+ \tfrac{t_n}{t_n-t_{n+1}} \bignorm{\tfrac{x_n-a}{t_n}- \tfrac{x_{n+1}-a}{t_{n+1}}} \\
& \le \tfrac{3}{2}\bignorm{\tfrac{x_n -a}{t_n} - v}+ \tfrac{1}{1-\alpha}\bignorm{\tfrac{x_n-a}{t_n}- \tfrac{x_{n+1}-a}{t_{n+1}}}.
\end{split}
\end{equation*}
As a consequence, \(\gamma'\) has a limit at \(0\) and so \(\gamma\) is differentiable at \(0\). We have thus proved that \(\gamma \in C^1([0,t_0],\R^m)\).
\end{proof}

\begin{proof}[Proof of proposition~\ref{characterization}]
Assume that \eqref{char1} holds. Since \(\psi\) is a local diffeomorphism, \(D\psi^{-1}(\psi(a)) \in \mathcal{L}(\R^m,\R^m)\) 
is a tangential derivative of \(\psi^{-1}\) at \(\psi(a)\) with respect to \(D\psi(a)[V]\) and is injective. By the chain rule (corollary~\ref{stableDiffeo}), since \(L\) is a tangential derivative of \(f\) at \(a = \psi^{-1}(\psi(a))\) with respect to \(V = D\psi^{-1}(\psi(a))[D\psi(a)[V]]\), \(L\circ D\psi^{-1}(\psi(a))\) is a tangential derivative of \(f\circ \psi^{-1}\) at \(\psi(a)\) with respect to \(D\psi(a)[V]\), and so a directional derivative. 

In order to prove that \eqref{char2} implies \eqref{char3}, since \(\gamma'(0) \in V\setminus \{0\}\), without loss of generality, if \(\gamma=(\gamma_1,\dots,\gamma_m)\), we assume that there exists \(\eta >0\) such that \(\gamma_1 \colon (-\eta,\eta)\to\R\) is one-to-one. 
We consider the function \(\Tilde{\psi} \colon (-\eta,\eta) \times \R^{m-1}\to \R^m\) defined for all \( (y_1,y'') \in (-\eta,\eta) \times \R^{m-1}\) as 
\[
\Tilde{\psi}(y_1,y'') = \gamma(y_1) + (0,y'').
\]
By construction, \( \Tilde{\psi} \colon (-\eta,\eta) \times \R^{m-1} \to \Tilde{\psi}((-\eta,\eta) \times \R^{m-1})\) is a diffeomorphism. 
So we take the function \(\psi = \Tilde{\psi}^{-1} \). 
Since \(f\circ \gamma = f\circ \psi^{-1}\) on \((-\eta, \eta)\times \{0\}\) and for every \(t\in (-\eta,\eta)\), \(D\psi^{-1}(\psi(a))[(t,0)] = \gamma'(0) t\in V\), \(f\circ \gamma\) is differentiable at \(0\) and \((f\circ \gamma)'(0) = L[\gamma'(0)]\).

We prove that \eqref{char3} implies \eqref{char4} by contradiction. 
Let \(v\in V \setminus \{0\}\) be such that \eqref{char4} is not satisfied. 
Then there exist \(\varepsilon >0\) and a sequence \((x_n)_{n\in \N}\subseteq A\) that converges to \(a\) such that \(\lim_{n \to \infty} \frac{x_n-a}{\norm{x_n-a}} =v\) but for every \(n\in \N\), 
\[
\norm{f(x_n)-f(a)- \norm{x_n-a} L[v]} > \varepsilon \norm{x_n-a}.
\]
Up to a subsequence, the sequence \((\norm{x_n-a})_{n\in \N}\) is decreasing and \(\limsup_{n\to \infty} \frac{\norm{x_{n+1}-a}}{\norm{x_n-a}} < 1\). 
Let \(\Tilde{\gamma}\in C^1([0,t_0],\R^m)\) be the path given by lemma~\ref{polynome} with \(t_n = \norm{x_n-a}\) for each \(n\in \N\).
Define \(\gamma \in C^1((-1,t_0],\R^m)\) as \(\gamma = \Tilde{\gamma}\) on \([0,t_0]\) and \(\gamma(t) = a + t v \) for all \(t\in (-1,0]\). 
As a consequence, \(\gamma'(0)=v \in V\) and
\begin{align*}
\norm{(f\circ \gamma)(t_n)-(f\circ \gamma)(0)- t_n L[\gamma'(0)]} 
& = \norm{f(x_n)-f(a)- \norm{x_n-a} L[v]} \\
& > \varepsilon \norm{x_n-a} = \varepsilon \abs{t_n},
\end{align*}
and so \(f\circ \gamma\) is not differentiable at \(0\).

Finally, we prove that \eqref{char4} implies \eqref{char1}.
We fix \(\varepsilon >0\). 
By assumption, for every \(v\in V\setminus\{0\}\), there exist \(\delta_v, \theta_v>0\) such that if \(x\in A\), \( \norm{x-a}\le \delta_v \) and \(\bignorm{\frac{x-a}{\norm{x-a}}- v} \le \theta_v\), then \(\bignorm{f(x)-f(a)-L[\norm{x-a} v ]} \le \frac{\varepsilon}{2} \norm{x-a}\).
Then we define \(\Tilde{\theta}_v = \min\bigl(\frac{\varepsilon}{2 \norm{L}},\frac{\theta_v}{2}\bigr)\). Since the set \(V \cap B[0,2]\) is compact, there exists \(N\in \N_*\) and \(v_1,\dots, v_N \in V \setminus\{0\}\) such that 
\[
V \cap B[0,2] \subseteq \bigcup_{i=1}^N B[v_i,\Tilde{\theta}_{v_i}].
\]
Finally, if \(x\in A\), \(v\in V\), \(\norm{x-a}\le \min_{1\le i \le N} \delta_{v_i}\) and
\[
\norm{x-a-v}\le \min\bigl(1,\min_{1\le i \le N}\frac{\theta_{v_i}}{2}\bigr) \norm{x-a},
\]
then \(\frac{\norm{v}}{\norm{x-a}} \le 2\) and so there exists \(v_i\in V\setminus\{0\}\) such that \(\norm{\frac{v}{\norm{x-a}}-v_i} \le \Tilde{\theta}_{v_i}\) and
\[ 
\norm{\frac{x-a}{\norm{x-a}} - v_i} \le \norm{\frac{x-a}{\norm{x-a}}-\frac{v}{\norm{x-a}}} + \norm{\frac{v}{\norm{x-a}}-v_i} \le \theta_{v_i},
\]
and thus 
\begin{align*}
\norm{f(x)-f(a)-L[v]} 
& \le \norm{f(x)-f(a)- \norm{x-a} L[v_i]}  + \norm{L}\norm{x-a}\norm{v_i-\frac{v}{\norm{x-a}}} \\
& \le \frac{\varepsilon}{2} \norm{x-a} + \norm{L} \norm{x-a} \frac{\varepsilon}{2(1+\norm{L})} \le \varepsilon \norm{x-a}. \qedhere
\end{align*}
\end{proof}

\begin{bibdiv}
\begin{biblist}

\bib{ambrosio}{article}{
   author={Ambrosio, L.},
   author={Dal Maso, G.},
   title={A general chain rule for distributional derivatives},
   journal={Proc. Amer. Math. Soc.},
   volume={108},
   date={1990},
   number={3},
   pages={691--702},
}


\bib{bastiani}{article}{
   author={Bastiani, Andr{\'e}e},
   title={Applications diff\'erentiables et vari\'et\'es diff\'erentiables de dimension infinie},
   journal={J. Analyse Math.},
   volume={13},
   date={1964},
   pages={1--114},
}


\bib{delfour}{book}{
   author={Delfour, M. C.},
   author={Zol{\'e}sio, J.-P.},
   title={Shapes and geometries},
   series={Advances in Design and Control},
   volume={4},
   subtitle={Analysis, differential calculus, and optimization},
   publisher={Society for Industrial and Applied Mathematics (SIAM)},
   date={2001},
}

\bib{fichmann}{article}{
   author={Cardoso, L. R. C.},
   author={Fichmann, L.},
   title={Diferencial de Hadamard vs. diferencial de Fréchet: um exemplo},
	 journal={Publ. Mat.},
   volume={37},
   date={1993},
   number={2},
   pages={369--386},
}

\bib{michal}{article}{
   author={Michal, Aristotle D.},
   title={Differential calculus in linear topological spaces},
   journal={Proc. Nat. Acad. Sci. U. S. A.},
   volume={24},
   date={1938},
   pages={340--342},
}

\bib{penot73}{article}{
   author={Penot, Jean-Paul},
   title={Calcul diff\'erentiel dans les espaces vectoriels topologiques},
   journal={Studia Math.},
   volume={47},
   date={1973},
   pages={1--23},
}

\bib{penot}{article}{
   author={Penot, Jean-Paul},
   title={Calcul sous-diff\'erentiel et optimisation},
   journal={J. Funct. Anal.},
   volume={27},
   date={1978},
   number={2},
   pages={248--276},
}

\bib{rock1979}{article}{
   author={Rockafellar, R. T.},
   title={Directionally Lipschitzian functions and subdifferential calculus},
   journal={Proc. London Math. Soc. (3)},
   volume={39},
   date={1979},
   number={2},
   pages={331--355},
}

\bib{rock1980}{article}{
   author={Rockafellar, R. T.},
   title={Generalized directional derivatives and subgradients of nonconvex functions},
   journal={Canad. J. Math.},
   volume={32},
   date={1980},
   number={2},
   pages={257--280},
}


\bib{shapiro}{article}{
   author={Shapiro, A.},
   title={On concepts of directional differentiability},
   journal={J. Optim. Theory Appl.},
   volume={66},
   date={1990},
   number={3},
   pages={477--487},
}

\end{biblist}
\end{bibdiv}
\end{document}